\renewcommand{\star}[1]{\ensuremath{{\fourIdx{*}{}{}{}{#1}}}}
\newcommand{\std}[1]{{\fourIdx{\circ}{}{}{}{#1}}}
\newcommand{\C}{\mathbb{C}}
\newcommand{\SC}{\star\C}
\newtheorem{Thm}{Theorem}
\newtheorem*{Thm*}{Theorem}
\newtheorem{Lemma}{Lemma}
\title[Roots depend continuously]{Yet another proof that the roots of a polynomial depend continuously on the coefficients}
\author{David A. Ross}
\address{Department of Mathematics\\University of Hawaii at Manoa\\
Honolulu, HI 96822}
\email{ross@math.hawaii.edu}
\subjclass[2020]{12D10, 03H05, 12L15}
\keywords{Polynomials, roots of polynomials, nonstandard analysis.}
\date{6 July 2022}
\begin{document}

\begin{abstract}The roots of a complex polynomial depend continuously on the coefficients; that is, an infinitesimal perturbation of the coefficients results in an infinitesimal perturbation of the roots.  A short, straightforward proof of this is possible using infinitesimals.
\end{abstract}

\maketitle

The coefficients of a polynomial with complex coefficients depend continuously on its roots; that is an immediate consequence of the elementary \emph{Vi\'{e}te Formulas}, known in some form as long ago as the 16th century.

The inverse, that the roots depend continuously on the coefficients, remains true, but is surprisingly tricky to prove---even to formulate correctly!---since, for example, a slight perturbation of the coefficients can cause roots to coalesce.  Most of the proofs in the standard literature employ relatively heavy machinery (see, for example,
\cite{cuck-corb89, harr-mart87, henr-isbe51, hiro20, mard66, ostr, rahm-schm02, whit72}).

Recently the author, in collaboration with Mel Nathanson, produced an entirely elementary proof \cite{NR} of this result.  While that paper uses nothing beyond high school algebra, the current author's contribution was motivated by an argument using infinitesimals in the sense of Abraham Robinson's \emph{nonstandard analysis} \cite{Robinson}.  This paper is an exposition of the infinitesimal approach.  While the proof here carries less information than the standard proof,
the lack of necessity for careful estimates make the proof shorter and arguably more transparent.  (See Section~\ref{remarks} for a further discussion.)

To make these arguments work requires just a few ideas from this nonstandard machinery;  our use of infinitesimals is not much different from what Euler might have used three centuries ago.  Readers unfamiliar with this machinery should read
Appendix~\ref{section:nonstandard} now.

\section*{Acknowledgement.}  This paper wouldn't exist in the absence of many discussions of the basic problem with Mel Nathanson, who introduced it at CANT 2022 (24-27 May 2022).

\section{Results}

Let \[f(z)=a_0+a_1 z+\dots+a_nz^n\]be a polynomial with complex coefficients and $a_n\neq 0$. By the Fundamental Theorem of Algebra, $f(z)$ fully factors as \[f(z)=a_n\prod\limits_{1\le i\le n}(z-r_i)\]
where $r_1, r_2,\dots, r_n$ are the roots of $f(z)$, possibly with duplication.

Of course it suffices that any complex polynomial $f(z)$ has a single root in $\C$; the full factorization is then an induction on degree.  For that reason, the fact that the extension $\SC$ is algebraically closed shows that \emph{nonstandard} polynomials, with coefficients in $\SC$, can likewise be fully factored.

In particular, suppose
\[g(z)=b_0+b_1 z+\dots+b_nz^n=b_n\prod\limits_{1\le i\le n}(z-s_i)\]
is a nonstandard polynomial, so each $b_i$ is in $\SC$.  Say that $g(z)$ is an \emph{infinitesimal deformation} of $f(z)$ provided $a_i\approx b_i$ for each $i\in\{0,1,\dots,n\}$.   Say that $g(z)$ is \emph{infinitesimally aligned} with $f(z)$ provided  there are orderings
$r_1,r_2,\dots,r_n$ (respectively, ${s}_1,{s}_2,\dots,{s}_n$) of the (not necessarily distinct) roots of $f(z)$ (respectively, $g(z)$), where ${s}_i\approx r_i$ for $i\in\{1,2,\dots,n\}$.

The main result of this paper is the following.

\begin{Thm}\label{theorem:main}Suppose
\[g(z)=b_0+b_1 z+\dots+b_nz^n=b_n\prod\limits_{1\le i\le n}(z-s_i)\]
is an infinitesimal deformation of
\[f(z)=a_0+a_1 z+\dots+a_nz^n=a_n\prod\limits_{1\le i\le n}(z-r_i)\]
(where $a_n\neq 0$).  Then $g(z)$ is infinitesimally aligned with $f(z)$.
\end{Thm}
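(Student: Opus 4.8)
The plan is to work entirely inside $\SC$, exploiting that the standard part map $\std{(\cdot)}$ is a ring homomorphism from the finite elements of $\SC$ onto $\C$. First observe that, since $a_n\neq 0$ is standard and $b_n\approx a_n$, the element $b_n$ is finite and non-infinitesimal, with $\std{b_n}=a_n\neq 0$; together with the algebraic closure of $\SC$ this justifies the displayed factorization $g(z)=b_n\prod_i(z-s_i)$ into $n$ linear factors. The strategy has three steps: (i) show every root $s_i$ is finite, so that $t_i:=\std{s_i}\in\C$ is defined; (ii) take standard parts of the coefficients of the two factorizations to force the polynomial identity $\prod_i(z-t_i)=\prod_i(z-r_i)$ over $\C$; (iii) invoke unique factorization of polynomials over $\C$ to match the multisets $\{t_1,\dots,t_n\}$ and $\{r_1,\dots,r_n\}$ and reorder the $s_i$ accordingly.

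For step (i) I would transfer the classical Cauchy bound: every root $s$ of a polynomial $c_0+c_1z+\dots+c_nz^n$ with $c_n\neq 0$ satisfies $|s|\le 1+\max_{0\le j<n}|c_j/c_n|$. (Its proof --- isolate $c_ns^n$, apply the triangle inequality, sum a geometric series --- is a first-order statement, so it holds verbatim for $g$ over $\SC$.) Since each $b_j\approx a_j$ is finite and $|b_n|\approx|a_n|>0$ is non-infinitesimal, every ratio $|b_j/b_n|$ is finite, whence every $|s_i|$ is finite and $t_i=\std{s_i}$ exists.

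For step (ii), expand $b_n\prod_{i=1}^n(z-s_i)$: its coefficient of $z^k$ equals $b_n(-1)^{n-k}e_{n-k}(s_1,\dots,s_n)$, where $e_m$ denotes the $m$th elementary symmetric polynomial. This is a finite element, since all the $s_i$ and $b_n$ are finite, so applying $\std{(\cdot)}$ and using that it commutes with the ring operations on finite elements gives standard part $a_n(-1)^{n-k}e_{n-k}(t_1,\dots,t_n)$. But that coefficient is also $b_k$, whose standard part is $a_k$ because $b_k\approx a_k$ is standard. Hence $a_n(-1)^{n-k}e_{n-k}(t_1,\dots,t_n)=a_k$ for every $k$, i.e.\ the polynomials $a_n\prod_i(z-t_i)$ and $f(z)=a_n\prod_i(z-r_i)$ have identical coefficients; dividing by $a_n$ shows the monic polynomials $\prod_i(z-t_i)$ and $\prod_i(z-r_i)$ coincide. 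For step (iii), uniqueness of factorization into linear factors over $\C$ then forces $(t_1,\dots,t_n)$ and $(r_1,\dots,r_n)$ to be the same multiset; choosing the ordering of the $s_i$ realizing $t_i=r_i$ yields $s_i\approx r_i$ for every $i$, which is exactly infinitesimal alignment.

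The only place where any genuine estimate enters is step (i), and there it is merely a uniform a priori bound on the modulus of a root --- no delicate separation of close or coalescing roots is needed, which is precisely the simplification the infinitesimal viewpoint provides. The one subtlety to monitor is ensuring that every quantity to which $\std{(\cdot)}$ is applied in step (ii) is in fact finite before invoking the homomorphism property; the Cauchy bound of step (i) is exactly what licenses this, so I expect that finiteness lemma to be the (modest) technical heart of the argument.
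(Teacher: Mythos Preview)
Your argument is correct, but it proceeds quite differently from the paper's. The paper argues by induction on the degree: it first proves (as its Lemma~2) that any single root $s$ of $g$ is finite with $\std{s}$ a root of $f$, then factors $f(z)=(z-r)\hat f(z)$ and $g(z)=(z-s)\hat g(z)$, uses the algebraic identity $(z-r)(\hat f(z)-\hat g(z))=f(z)-g(z)-\hat g(z)(s-r)$ to show $\hat g$ is an infinitesimal deformation of $\hat f$, and applies the induction hypothesis. Your route is non-inductive: you bound \emph{all} roots at once, then push the standard part map through the Vi\`ete formulas to obtain $\prod_i(z-\std{s_i})=\prod_i(z-r_i)$ as polynomials over $\C$, and finish with unique factorization. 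This is shorter and arguably more transparent; the paper even notes (in the remarks after Lemma~1) that the Vi\`ete formulas give the implication you exploit, but it does not take that route for the main theorem. Two minor remarks: for step~(i) you do not need to invoke transfer of the Cauchy bound at all---the paper's two-line argument (if $s$ were infinite then $0=g(s)/s^n\approx a_n\ne 0$) uses only that $|\cdot|$ is an absolute value on $\SC$, which is all the appendix supplies; and your invocation of unique factorization in $\C[z]$ is the one ingredient the paper's inductive proof manages to avoid, at the cost of the extra peeling-off machinery.
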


\section{Two lemmas}

Before proving Theorem~\ref{theorem:main} we need two lemmas.

\begin{Lemma}\label{lemma:stdofpolynomial}Suppose $f(z)=\sum_{i=0}^na_iz^i$, where $a_n\neq 0$, is a polynomial with coefficients in $\C$, and $g(z)=\sum_{i=0}^nb_iz^i$ is a polynomial with coefficients in $\SC$.  Consider the statements:
\begin{enumerate}[label=(\roman*)]
\item $g(z)$ is infinitesimally aligned with $f(z)$ and $b_n\approx a_n$
\item $g(z)\approx f(z)$ for all $z\in\C$
\item $g(z)$ is an infinitesimal deformation of $f(z)$
\end{enumerate}
Then $(i)\Rightarrow(ii)\Leftrightarrow(iii)$
\end{Lemma}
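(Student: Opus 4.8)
The plan is to prove the three implications $(iii)\Rightarrow(ii)$, $(i)\Rightarrow(ii)$, and $(ii)\Rightarrow(iii)$ separately, dispatching the first two by direct computation and reserving the real work for the last.

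For $(iii)\Rightarrow(ii)$, fix a standard $z\in\C$. Each power $z^{i}$ is standard, hence finite, and a sum of $n+1$ terms, each a product of a finite number and an infinitesimal, is infinitesimal; so $g(z)-f(z)=\sum_{i=0}^{n}(b_i-a_i)z^{i}\approx 0$. For $(i)\Rightarrow(ii)$, note that $b_n\approx a_n$ with $a_n$ standard forces $b_n$ to be finite, and $s_i\approx r_i$ with $r_i$ standard forces each $s_i$ to be finite; then for standard $z$ each factor $z-s_i$ is finite and $\approx z-r_i$, and multiplying finitely many such near-equalities (together with $b_n\approx a_n$) gives $b_n\prod_i(z-s_i)\approx a_n\prod_i(z-r_i)$, i.e.\ $g(z)\approx f(z)$.

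The substantive implication is $(ii)\Rightarrow(iii)$, which I would prove by recovering the coefficients from finitely many evaluations. Choose $n+1$ distinct standard points $z_0,\dots,z_n\in\C$ (for instance $z_j=j$). The Vandermonde matrix $V$ with $(j,i)$-entry $z_j^{\,i}$ has standard entries and determinant $\prod_{j<k}(z_k-z_j)\neq 0$, which is standard; hence $V^{-1}$ exists and also has standard entries. Writing $\mathbf a=(a_0,\dots,a_n)^{T}$ and $\mathbf b=(b_0,\dots,b_n)^{T}$, we have $V\mathbf a=(f(z_j))_{j}$ and $V\mathbf b=(g(z_j))_{j}$, so $\mathbf a=V^{-1}(f(z_j))_{j}$ and $\mathbf b=V^{-1}(g(z_j))_{j}$. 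By $(ii)$, $g(z_j)\approx f(z_j)$ for each $j$; since the entries of $V^{-1}$ are standard (hence finite), each $b_i$ is an infinitesimal perturbation of the corresponding $a_i$, i.e.\ $b_i\approx a_i$, which is $(iii)$.

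I do not expect a genuine obstacle here; the only thing to watch is the elementary nonstandard bookkeeping---``finite times infinitesimal is infinitesimal, and a standard-length sum of infinitesimals is infinitesimal''---which is exactly the arithmetic reviewed in the Appendix. (An alternative route for $(ii)\Rightarrow(iii)$ that avoids Vandermonde: if some $b_i-a_i$ were not infinitesimal, divide $g-f$ through by a coefficient of maximal modulus and take standard parts of the resulting finite coefficients to obtain a nonzero standard polynomial vanishing at every standard point, a contradiction. This is arguably more in the spirit of the paper, but the interpolation argument is shorter to write out.)
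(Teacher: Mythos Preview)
Your argument is correct; the two easy implications match the paper's, and the real difference is in $(ii)\Rightarrow(iii)$. The paper first shows all $b_i$ are finite by dividing $g$ through by the coefficient of largest modulus and taking standard parts (exactly the alternative you sketch in your final parenthetical), and then concludes $\std{b_i}=a_i$ from the fact that two standard polynomials agreeing on $\C$ have the same coefficients. Your Vandermonde route is a genuinely different organization: you only evaluate at $n+1$ standard points, and because $V^{-1}$ has standard entries the identity $\mathbf b-\mathbf a=V^{-1}\bigl(g(z_j)-f(z_j)\bigr)_j$ simultaneously yields finiteness of the $b_i$ and $b_i\approx a_i$ in one stroke. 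The trade-off: your approach is explicit and quantitative (it directly exhibits each $b_i-a_i$ as a fixed standard linear combination of the infinitesimals $g(z_j)-f(z_j)$), while the paper's contradiction argument is more in the spirit of nonstandard reasoning and avoids importing the Vandermonde determinant. Both need only finitely many values of $z$, as the paper itself remarks.
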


\begin{proof} Assume $(i)$ holds.  For any (standard)$z\in\C$:
\[\std{g(z)}=\std{\displaystyle\left(b_n\prod\limits_{1\le i\le n}(z-s_i)\right)}=\std{b_n}\prod\limits_{1\le i\le n}({z}-\std{s_i})=a_n\prod\limits_{1\le i\le n}({z}-r_i)=f({z})\]
This proves $(i)\Rightarrow(ii)$.

Now, suppose $(ii)$ holds.  Suppose first that we know that all the coefficients of $g(z)$ are finite.  For any standard (standard)$z\in\C$,
\[ \sum_{i=0}^n\std{b_i}z^i=\std{\displaystyle\left(\sum_{i=0}^n{b_i}z^i\right)}=\std{g(z)}=f(z)=\sum_{i=0}^na_iz^i\]
and two polynomials agree on $\C$ only if they have the same coefficients; that is, $b_i\approx a_i$ for every $i$, proving $(ii)\Rightarrow(iii)$.  It remains to show that if $g(z)$ takes only finite values for standard $z$, then it has finite coefficients.  If not, then there is a coefficient $j\le n$ with $|b_j|$ largest.  But then for any (standard)$z\in\C$,
\[0=\std{\displaystyle\left(\frac{g(z)}{b_j}\right)}=\std{\displaystyle\left(\sum_{i=0}^n\frac{b_i}{b_j}z^i\right)}=\sum_{i=0}^n\std{\displaystyle\left(\frac{b_i}{b_j}\right)}z^i\]
so $\std{(b_i/b_j)}$ must be $0$ for all $i$; but$\std{(b_j/b_j)}=1$, a contradiction.

Finally, suppose that $(iii)$ holds, and $z\in\C$ is standard.  Then
\[ \std{g(z)} =\std{\displaystyle\left(\sum_{i=0}^n{b_i}z^i\right)} =\sum_{i=0}^n\std{b_i}z^i = \sum_{i=0}^na_iz^i={f}(z)\]
proving $(iii)\Rightarrow(ii)$
\end{proof}

\subsection*{Remarks} (1)~In the proof of $(ii)\Rightarrow(iii)$, it is only required that $g(z)\approx f(z)$ for infinitely many values (or even at least $n$ different values) of $z$.  (2)~The implication $(i)\Rightarrow(iii)$ in this lemma is also a consequence of the Vi\'{e}te formulas, which give the coefficients of the polynomial as elementary symmetric functions of the roots. (3)~The requirement that $b_n\approx a_n$ in $(i)$ is necessary to account for the fact that the roots of a polynomial only determine the coefficients up to a constant multiple.

The second lemma is the key to the full result.

\begin{Lemma}\label{lemma:existsanearbyroot}Suppose $g(z)=\sum_{i=0}^nb_iz^i$ is an infinitesimal deformation of $f(z)=\sum_{i=0}^na_iz^i$, where $a_n\neq 0$.  Then for every root ${s}$ of $g(z)$, $r=\std{s}$ is a root of $f(z)$.
\end{Lemma}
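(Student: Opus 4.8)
The plan is to first show that any root $s$ of $g(z)$ must be a \emph{finite} element of $\SC$, and then to push the equation $g(s)=0$ down into $\C$ using the standard part map.

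First I would rule out infinite roots. Since $g(z)$ is an infinitesimal deformation of $f(z)$, each $b_i\approx a_i$; in particular every $b_i$ is finite, and $b_n\approx a_n\neq 0$, so $b_n$ is not infinitesimal. Suppose toward a contradiction that $s$ is a root of $g(z)$ with $s$ infinite. Dividing $g(s)=0$ by $s^n$ gives
\[ 0=\frac{g(s)}{s^{n}}=b_n+\sum_{i=0}^{n-1}\frac{b_i}{s^{n-i}} . \]
For each $i<n$ the summand $b_i/s^{n-i}$ is a finite number divided by an infinite number, hence infinitesimal; so $b_n\approx 0$, contradicting that $b_n$ is not infinitesimal. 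Therefore every root $s$ of $g(z)$ is finite, and $r:=\std{s}$ is a genuine (standard) complex number.

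Next I would estimate $f(r)$ by writing
\[ f(r)-g(s)=\bigl(f(r)-f(s)\bigr)+\bigl(f(s)-g(s)\bigr). \]
Because $f$ has standard coefficients and $s$ is finite, the standard part map is a homomorphism on the relevant finite quantities, so $\std{f(s)}=\sum_{i=0}^{n}a_i\,(\std{s})^{i}=f(r)$; thus the first bracket is infinitesimal. The second bracket equals $\sum_{i=0}^{n}(a_i-b_i)s^{i}$, a sum of products of the infinitesimal $a_i-b_i$ with the finite $s^{i}$, hence it too is infinitesimal. So $f(r)\approx g(s)=0$. But $f(r)$ is a standard complex number, and the only standard complex number infinitely close to $0$ is $0$ itself; hence $f(r)=0$, i.e., $r=\std{s}$ is a root of $f(z)$.

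The step I expect to be the main obstacle is the first one: ruling out the possibility that perturbing the coefficients sends a root off to infinity. This is precisely where the hypothesis $a_n\neq 0$ is needed --- it forces $b_n$ to stay bounded away from $0$, which is exactly what keeps the roots finite. Once finiteness is established, the rest is a routine computation with the standard part map (and does not even need Lemma~\ref{lemma:stdofpolynomial}).
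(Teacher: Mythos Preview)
Your proof is correct and essentially the same as the paper's. The first step (ruling out infinite roots by dividing by $s^n$) is identical; in the second step the paper compresses your two-bracket decomposition into a single chain $f(\std{s})=\sum a_i(\std{s})^i=\sum\std{b_i}(\std{s})^i=\std{\bigl(\sum b_i s^i\bigr)}=\std{0}=0$, but the underlying reasoning is the same use of the standard part homomorphism together with $a_i\approx b_i$.
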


\begin{proof}First, we note that ${s}$ is not infinite; otherwise,
\[0=\frac{g({s})}{{s}^n}=b_n+\frac{b_{n-1}}{{s}}+\dots+\frac{b_0}{{s}^n}\approx a_n+0+\dots+0=a_n\]
since each $b_i\approx a_i$ is finite.  But $a_n\neq 0$ by hypothesis, a contradiction.

Since ${s}$ is finite, it has a standard part $\std{{s}}$, and
\[ {f}(\std{{s}}) = \sum_{i=0}^na_i(\std{{s}})^i =\sum_{i=0}^n\std{b_i}(\std{{s}})^i =  \std{\displaystyle\left(\sum_{i=0}^n{b_i}{{s}}^i\right)}=\std{0}=0   \]
So $r=\std{{s}}$ is a root of $f(z)$.
\end{proof}

\section{Proof that the roots depend continuously on the coefficients.}

We can now prove Theorem~\ref{theorem:main}. The proof is by induction on $n$. We are given that
\[g(z)=b_0+b_1 z+\dots+b_nz^n=b_n\prod\limits_{1\le i\le n}(z-s_i)\]
is an infinitesimal deformation of
\[f(z)=a_0+a_1 z+\dots+a_nz^n=a_n\prod\limits_{1\le i\le n}(z-r_i)\]

When $n=1$ the only root of $f(z)$ is $r_1=-a_0/a_1$, and the only root of $g(z)$ is ${s}_1=-b_0/b_1$. Moreover, $\std{(-b_0/b_1)}=-\std{b_0}/\std{b_1}=-a_0/a_1$ by the usual rules of nonstandard arithmetic on $\SC$.  It follows that ${s}_1\approx r_1$.

Now, suppose the result is true for polynomials of degree $n-1$.  Let ${s}$ be a root of $g(z)$.  Since $g(z)$ is an infinitesimal deformation of $f(z)$, by Lemma~\ref{lemma:existsanearbyroot} there is a root $r$ of $f(z)$ with $r\approx{s}$.  Write
$f(z)={(z-r)}\hat{f}(z)$ and $g(z)={(z-s)}\hat{g}(z)$, where $\hat{f}$ and $\hat{g}$ are polynomials of degree $n-1$ over their respective fields.  Then for any $z\in\C$:
\begin{equation}\label{equation:hateqn}(z-r)(\hat{f}(z)-\hat{g}(z))=f(z)-g(z)-\hat{g}(z)(s-r)\end{equation}
When $z\neq r$, $\hat{g}(z)=g(z)/(z-s)$ is finite, so the right hand side of (\ref{equation:hateqn}) is infinitesimal, so
$\hat{f}(z)-\hat{g}(z)\approx 0$, or $\hat{f}(z)\approx\hat{g}(z)$.  By Lemma~\ref{lemma:stdofpolynomial} (and the remarks following) $\hat{g}(z)$ is an infinitesimal deformation of $\hat{f}(z)$.

By the induction hypothesis
there are orderings $r_1,r_2,\dots,r_{n-1}$ (respectively, ${s}_1,{s}_2,\dots,{s}_{n-1}$) of the roots of $\hat{f}(z)$ (respectively, $\hat{g}(z)$) with ${s}_i\approx r_i$ for every $i\in\{1,\dots,n-1\}$.  Put $r_n=r$ and  ${s}_n={s}$, then ${s}_i\approx r_i$ for every $i\in\{1,\dots,n\}$,
proving that $g(z)$ is infinitesimally aligned with  $f(z)$.

The theorem is proved.

\section{Remarks}\label{remarks}

Lemma~\ref{lemma:existsanearbyroot} in this paper is the analogue of \cite[Theorem 4]{NR}, and plays a similar role in the later induction.  The difference is subtle: in \cite[Theorem 4]{NR}, given a root of $f(z)$, we show that small perturbations $g(z)$ have nearby roots.  Lemma~\ref{lemma:existsanearbyroot}, on the other hand, shows that given a root of a perturbation $g(z)$ of $f(z)$, there is a nearby root of $f(z)$.  The lemma here gives a possibly more direct route to the full theorem, but the proof in \cite{NR} has an additional benefit: given a tolerance $\epsilon$, it gives a constructive bound $\delta=\delta(\epsilon)$ so that if the perturbation is kept smaller than $\delta$ then its roots are within the tolerance $\epsilon$ of the roots of $f$.  I do not see any way to extract this bound from the proof in this paper.

Moreover, while the proofs in \cite{NR} apply to any algebraically closed field with an absolute value, the proofs here (especially that of Lemma~\ref{lemma:existsanearbyroot}) use the standard part map in an essential way; this requires the absolute value on the underlying field to be complete and Archimedean, so by Ostrowski's Theorem we must be working on $\C$.

In the proof of Theorem~\ref{theorem:main}, it was not necessary to give an explicit form for $\hat{f}(z)$ and $\hat{g}(z)$ (as we did in the corresponding section of \cite{NR}).  Again, while this simplification is a byproduct of the infinitesimal approach, it loses information about the nature of the approximation that is present in the standard paper.

Finally, note that the wording of Theorem~\ref{theorem:main}, which references nonstandard elements, is equivalent to the following standard statement:

\begin{Thm*}Suppose
\[f(z)=a_0+a_1 z+\dots+a_nz^n=a_n\prod\limits_{1\le i\le n}(z-r_i)\]
is a polynomial over $\C$, where $a_n\neq 0$.  For every $\epsilon>0$ there exists a $\delta>0$ such that
whenever
\[g(z)=b_0+b_1 z+\dots+b_nz^n=b_n\prod\limits_{1\le i\le n}(z-s_i)\]
is polynomial with $|a_i-b_i|<\delta$ for $0\le i\le n$,
then there are orderings $\{r_i\}_{i=1}^n$ and $\{s_i\}_{i=1}^n$ of the roots of $f$ and $g$ (respectively), possibly with duplication, so that $|r_i-s_i|<\epsilon$ for all $i$.
\end{Thm*}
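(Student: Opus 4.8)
The plan is to recover this standard statement from Theorem~\ref{theorem:main} by transfer, arguing by contradiction. Suppose the statement fails for the given $f$. Then there is a fixed $\epsilon>0$ such that for every $\delta>0$ one can choose a polynomial $g=\sum_{i=0}^n b_i z^i$ with $|a_i-b_i|<\delta$ for all $i$ but for which no pair of orderings of the roots of $f$ and of $g$ is simultaneously $\epsilon$-close. Specializing $\delta=\min(1/m,\,|a_n|)$ for each positive integer $m$ and invoking countable choice, I would fix a sequence $m\mapsto g_m=\sum_{i=0}^n b_i^{(m)}z^i$ of such polynomials; since $\delta\le|a_n|$ forces $b_n^{(m)}\neq0$, each $g_m$ genuinely has degree $n$ and (counted with multiplicity) exactly $n$ roots, so the factored form in the statement makes sense.

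Next I would pass to the nonstandard extension. The sequence $(g_m)_{m\in\mathbb N}$, regarded as a function $\mathbb N\to\C^{n+1}$ sending $m$ to the coefficient vector of $g_m$, has a nonstandard extension; fix any infinite $N\in\star{\mathbb N}$ and let $g:=g_N=\sum_{i=0}^n b_i z^i$ be the corresponding internal polynomial, with $b_i\in\SC$. By transfer $|a_i-b_i|<1/N$ for each $i$, and $1/N$ is infinitesimal, so $a_i\approx b_i$ for all $i$; thus $b_n\neq0$, $g$ has degree $n$, and $g$ is an infinitesimal deformation of $f$. By Theorem~\ref{theorem:main}, $g$ is infinitesimally aligned with $f$: there are orderings $r_1,\dots,r_n$ of the roots of $f$ and $s_1,\dots,s_n$ of the roots of $g$ with $s_i\approx r_i$, hence $|r_i-s_i|<\epsilon$, for every $i$.

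This contradicts transfer of the defining property of the $g_m$. Indeed, fix once and for all a listing $r_1,\dots,r_n$ of the roots of $f$; the assertion that $g_m$ is not $\epsilon$-aligned with $f$ is first-order, since it says that for every permutation $\sigma\in S_n$ and every tuple $(s_1,\dots,s_n)$ satisfying the polynomial identity $g_m(z)=b_n^{(m)}\prod_i(z-s_i)$ there is an index $i$ with $|r_{\sigma(i)}-s_i|\ge\epsilon$ --- a finite disjunction, over the finite group $S_n$, of finitely many polynomial-equality-and-inequality conditions on the $s_i$ and the coefficients. This holds for every $m\in\mathbb N$, so by transfer it holds for $m=N$; that is, $g=g_N$ is not $\epsilon$-aligned with $f$, contradicting the previous paragraph. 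Hence the standard statement holds.

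I expect the step needing the most care to be exactly this last one: one must be sure the negated conclusion is an \emph{internal} property of $g_m$ so that transfer applies to the single instance $g_N$, and in particular that ``$(s_1,\dots,s_n)$ is an ordering of the roots of $g$ with multiplicity'' is encoded by the polynomial identity (equivalently, by Vi\'{e}te's relations) rather than merely by $g(s_i)=0$. A minor routine point is keeping $\delta\le|a_n|$ so that $b_n\neq0$ and Theorem~\ref{theorem:main} applies verbatim to $g_N$. (The reverse implication is also immediate by transfer: given an infinitesimal deformation of $f$, the transferred standard statement yields, for each standard $\epsilon>0$, some pair of orderings of the roots that is $\epsilon$-close; since there are only finitely many such pairs, one of them is $\epsilon$-close for every standard $\epsilon>0$, and that pair witnesses infinitesimal alignment.)
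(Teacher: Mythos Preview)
Your argument is correct and is precisely the ``straightforward exercise in nonstandard analysis'' that the paper alludes to but does not carry out: the paper gives no proof of this standard statement beyond asserting its equivalence with Theorem~\ref{theorem:main} and noting that the equivalence ``does require a more sophisticated nonstandard model than the simple extension of $\C$'' used elsewhere. Your contradiction-plus-transfer argument (build a sequence of counterexamples, extend to an infinite index $N$, apply Theorem~\ref{theorem:main} to $g_N$, then transfer the first-order failure of $\epsilon$-alignment to reach a contradiction) is exactly the expected route, and your care in encoding ``$(s_1,\dots,s_n)$ is an ordering of the roots with multiplicity'' via the Vi\'{e}te relations, together with the finite disjunction over $S_n$, is what makes the negated conclusion internal so that transfer applies. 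Your side remark about needing to extend sequences and apply transfer is the same observation the paper makes about requiring a richer nonstandard model than the bare field $\SC$ described in the appendix. The parenthetical reverse implication you include (a pigeonhole over the finitely many pairs of orderings) is also correct and, again, not spelled out in the paper.
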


The proof of the equivalence is a straightforward exercise in nonstandard analysis, but does require a more sophisticated nonstandard model than the simple extension of $\C$ that needed for the earlier sections (and describe below).

\appendix

\section{All the infinitesimals we need}\label{section:nonstandard}

We assume the existence of an algebraically closed field $\star{\mathbb{C}}$ extending ${\mathbb{C}}$, with some extra properties.  In addition to the \emph{standard} elements of ${\mathbb{C}}$, this field also contains some new, \emph{nonstandard} elements.  Among those are \emph{infinitesimals}, namely numbers $z$ with the property that $|z|<1/n$ for every positive integer $n$ (where the usual norm $|\cdot|$ on $\C$ is extended to all of $\star{\mathbb{C}}$ in a natural way, taking values in a suitable ordered field extension of ${\mathbb{R}}$).  Of course, $0$ is an infinitesimal under this definition, but we assume the existence of nonzero infinitesimals as well.  Since $\star{\mathbb{C}}$ is a field, every nonzero infinitesimal has a reciprocal; reciprocals of infinitesimals are called \emph{infinite}.

Elements of $\star{\mathbb{C}}$ which are not infinite are \emph{finite}; equivalently, $z$ is finite if $|z|<n$ for some integer $n$.  Every finite element $z$ of $\star{\mathbb{C}}$ differs by an infinitesimal from a unique element of ${\mathbb{C}}$, which we denote by $\std{z}$ and call the \emph{standard part} of $z$.  This standard part function is the modern equivalent to ``neglecting higher order terms," and respects normal arithmetic operations:  if $a, b$ are finite elements of $\star{\mathbb{C}}$ then $\std{(ab)}=\std{a}\std{b}$, $\std{(a+b)}=\std{a}+\std{b}$, and if $b$ is not infinitesimal then $\std{(a/b)}=\std{a}/\std{b}$.
If $z$ is a standard element of $\mathbb{C}$ then $\std{z}=z$.

(In the language of algebra, the finite elements of $\star{\mathbb{C}}$ form a subring of $\star{\mathbb{C}}$, and the standard part map is a ring homomorphism from $\star{\mathbb{C}}$ onto $\mathbb{C}$.)

If $a-b$ is infinitesimal we write $a\approx b$.  Thus $a$ is infinitesimal if and only if $a\approx 0$; more generally, if $a$ is finite then $\std{a}\approx a$.

For more information about extensions like $\star{\mathbb{C}}$, including constructions of the field and exploration of other properties,  the reader is referred to any good introduction to the subject, for example Robinson~\cite{Robinson},  Davis~\cite{Davis}, or Tom Lindstr{\o}m's excellent introduction \cite{Cutland}.  That said, the previous paragraphs contain everything necessary to carry out the construction here.


\begin{thebibliography}{99}

\bibitem{cuck-corb89}
F. Cucker and A. G. Corbalan, An alternate proof of the continuity of the roots of a polynomial,
Amer. Math. Monthly 96 (1989), 342--345.

\bibitem{Cutland}N. J. Cutland, Nonstandard analysis and its applications, Cambridge University Press, 1988


\bibitem{Davis} M. Davis,  \newblock{ Applied Nonstandard Analysis}, 2nd edition, Dover, 2005.

\bibitem{harr-mart87}
G. Harris and C. Martin,
The roots of a polynomial vary continuously as a function of the coefficients,
Proc. Amer. Math. Soc. 100 (1987), 390--392.

\bibitem{henr-isbe51}
M. Henriksen and J. R. Isbell,
 On the continuity of the real roots of an algebraic equation,
Proc. Amer. Math. Soc. 4 (1953), 431--434.

\bibitem{hiro20}
K. Hirose, Continuity of the roots of a polynomial,
Amer. Math. Monthly 127 (2020), 359--363.

\bibitem{mard66}
M.  Marden, \emph{Geometry of Polynomials},
Mathematical Surveys, Vol. 3, Amer. Math. Soc., Providence, 1966.

\bibitem{NR} M. B. Nathanson, D. A. Ross, Continuity of the roots of a polynomial, 2022.  \href{https://arxiv.org/abs/2206.13013v1}{https://arxiv.org/abs/2206.13013v1}

\bibitem{ostr}
A. Ostrowski, \emph{Solution of Equation in Euclidean and Banach Spaces},
Academic Press, 1973.

\bibitem{Robinson} A. Robinson,  {Non-standard Analysis}, 2nd edition, North-Holland, Amsterdam, 1974.







\bibitem{rahm-schm02}
Q. I. Rahman and G. Schmeisser, \emph{Analytic Theory of Polynomials},
Clarendon Press, Oxford, 2002.


\bibitem{whit72}
H. Whitney, \emph{Complex Analytic Manifolds},
Addison-Wesley, Reading, 1972

\end{thebibliography}
\end{document}